\DeclareFontFamily{OT1}{rsfs}{}
\DeclareFontShape{OT1}{rsfs}{n}{it}{<-> rsfs10}{}
\DeclareMathAlphabet{\mathscr}{OT1}{rsfs}{n}{it}
\theoremstyle{plain}
 \newtheorem{prop}{Proposition}[section]
 \newtheorem{lem}[prop]{Lemma}
 \newtheorem{conj}[prop]{Conjecture}
 \newtheorem{thm}[prop]{Theorem}
\theoremstyle{definition}
\theoremstyle{remark}
  \newtheorem{rem}[subsection]{Remark}
\numberwithin{equation}{section}
   \DeclareMathOperator{\ord}{ord}
\title{Upper bounds on the solutions to $n = p+m^2$}
\author{Aran Nayebi}
\email{aran.nayebi@gmail.com}
\subjclass[2000]{Primary 11P32; Secondary 11P55}
\keywords{Additive, Conjecture H, circle method}
\begin{document}
\begin{abstract}
Hardy and Littlewood conjectured that every large integer $n$ that is not a square is the sum of a prime and a square. They believed that the number $\mathcal{R}(n)$ of such representations for $n = p+m^2$ is asymptotically given by
\begin{equation*}
\mathcal{R}(n) \sim \frac{\sqrt{n}}{\log n}\prod_{p=3}^{\infty}\left(1-\frac{1}{p-1}\left(\frac{n}{p}\right)\right),
\end{equation*}
where $p$ is a prime, $m$ is an integer, and $\left(\frac{n}{p}\right)$ denotes the Legendre symbol. Unfortunately, as we will later point out, this conjecture is difficult to prove and not \emph{all} integers that are nonsquares can be represented as the sum of a prime and a square. Instead in this paper we prove two upper bounds for $\mathcal{R}(n)$ for $n \le N$. The first upper bound applies to \emph{all} $n \le N$. The second upper bound depends on the possible existence of the Siegel zero, and assumes its existence, and applies to all $N/2 < n \le N$ but at most $\ll N^{1-\delta_1}$ of these integers, where $N$ is a sufficiently large positive integer and $0<\delta_1 \le 0.000025$.
\end{abstract}

\maketitle

\def\ord{{\mathrm{ord}}}
\def\scr{\scriptstyle}
\def\\{\cr}
\def\[{\left[}
\def\]{\right]}
\def\<{\langle}
\def\>{\rangle}
\def\fl#1{\left\lfloor#1\right\rfloor}
\def\rf#1{\left\lceil#1\right\rceil}
\def\lcm{{\rm lcm\/}}

\def\C{\mathbb C}
\def\R{\mathbb R}
\def\Q{{\mathbb Q}}
\def\F{{\mathbb F}}
\def\Z{{\mathbb Z}}
\def\cO{{\mathcal O}}

\def\ord{{\mathrm{ord}}}
\def\Nm{{\mathrm{Nm}}}
\def\L{{\mathbb L}}

\def\xxx{\vskip5pt\hrule\vskip5pt}
\def\yyy{\vskip5pt\hrule\vskip2pt\hrule\vskip5pt}

\section{Introduction}
\subsection{Background Information}
In this paper, we consider the following conjecture of Hardy and Littlewood \cite{1}:
\begin{conj}[Hardy-Littlewood; Conjecture H]\label{HL}
Every large integer $n$ that is not a square is the sum of a prime and a square. The number $\mathcal{R}(n)$ of representations for $n = p + m^2$ is given asymptotically by
\begin{equation} \label{1}
\mathcal{R}(n) \sim \mathscr{P}(n)\frac{\sqrt{n}}{\log n}
\end{equation}
\begin{equation}
\mathscr{P}(n) = \prod_{p=3}^{\infty}\left(1-\frac{1}{p-1}\left(\frac{n}{p}\right)\right),
\end{equation}
where $p$ is a prime, $m$ is an integer, and $\left(\frac{n}{p}\right)$ denotes the Legendre symbol.
\end{conj}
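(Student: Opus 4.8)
The plan is to attack $\mathcal{R}(n)$ by the Hardy--Littlewood circle method, which is the natural vehicle for an additive problem of this shape. Writing $e(\theta) = e^{2\pi i \theta}$, I would encode the two sequences through the exponential sums
\begin{equation*}
S(\alpha) = \sum_{p \le n} (\log p)\, e(p\alpha), \qquad T(\alpha) = \sum_{1 \le m \le \sqrt{n}} e(m^2 \alpha),
\end{equation*}
so that, up to the harmless weight $\log p$, the weighted count of representations becomes the single integral
\begin{equation*}
\sum_{p + m^2 = n} \log p = \int_0^1 S(\alpha)\, T(\alpha)\, e(-n\alpha)\, d\alpha,
\end{equation*}
by orthogonality of the additive characters. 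The strategy is then a Farey dissection of $[0,1]$ into major arcs $\mathfrak{M}$ centered at rationals $a/q$ with $q$ small and complementary minor arcs $\mathfrak{m}$, with the cutoff to be optimized at the end.

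First I would treat the major arcs. Near $a/q$ the prime sum $S(\alpha)$ is governed by the distribution of primes in the residue class $a \bmod q$, so its local behavior is $\tfrac{\mu(q)}{\phi(q)}$ times a smooth integral, with the error controlled by the Siegel--Walfisz theorem (or, conditionally, by a zero-free region of the relevant $L$-functions); the Weyl sum $T(\alpha)$ near $a/q$ factors through a quadratic Gauss sum $\sum_{h \bmod q} e(a h^2/q)$ times a theta-type integral. Multiplying the two local expansions, summing the resulting arithmetic factors over $q$, and recognizing the Euler product, I would obtain the expected main term $\mathcal{R}(n) \approx \mathscr{P}(n)\,\sqrt{n}/\log n$, in which $\mathscr{P}(n)$ emerges precisely as the singular series measuring the local density of solutions to $n \equiv p + m^2$ modulo each prime.

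The remaining --- and decisive --- step is to show that the minor arcs contribute only $o\!\left(\sqrt{n}/\log n\right)$, and this is where I expect the proof to break down. The natural tools are Weyl's inequality for the square sum, giving a pointwise bound of the shape $|T(\alpha)| \ll n^{1/4+\varepsilon}$ on $\mathfrak{m}$, together with Vinogradov's bilinear estimates for $S(\alpha)$ and the mean-square identities $\int_0^1 |S|^2 \ll n\log n$ and $\int_0^1 |T|^2 = \lfloor\sqrt{n}\rfloor$. Here lies the genuine obstacle and the reason Conjecture~\ref{HL} remains open: the squares form a \emph{thin} sequence, furnishing only about $\sqrt{n}$ terms, so the $L^2$ mass carried by $T$ is merely $n^{1/4}$, while the target main term is of comparable size $\sqrt{n}/\log n$. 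Every way of bounding the minor-arc integral by a pointwise factor times a mean-square factor therefore loses, and one cannot force the minor arcs below the main term for \emph{every} $n$. This structural deficit is exactly why the present paper retreats to the more tractable goals announced in the abstract: an upper bound for $\mathcal{R}(n)$ valid for all $n \le N$, and, under the hypothesis of a Siegel zero, a sharper bound failing on an exceptional set of size only $\ll N^{1-\delta_1}$, rather than the full conjectural asymptotic.
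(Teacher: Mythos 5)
The statement you were asked about is Conjecture~\ref{HL}, and the paper offers no proof of it --- nor could it: the conjecture is open, and the paper says as much, quoting Br\"{u}nner, Perelli, and Pintz to the effect that the asymptotic is ``unlikely to be provable in the present state of knowledge'' because of the possible Siegel zero. Your submission is therefore not a proof, but to your credit you never claim it is: you set up the circle method correctly, identify where the singular series $\mathscr{P}(n)$ would emerge from the major arcs, and then correctly locate the fatal obstruction on the minor arcs. Your diagnosis is accurate and is essentially the standard one: the squares are a thin sequence, so $\int_0^1|T|^2 \approx \sqrt{n}$ is already the size of the conjectured main term, and no combination of pointwise (Weyl) and mean-square bounds can push the minor-arc contribution below $\sqrt{n}/\log n$ for \emph{every} $n$; one only wins on average over $n$, which is why the literature (Davenport--Heilbronn, Miech, Polyakov) obtains the asymptotic for almost all $n$ rather than all $n$. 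The framework you describe is in fact exactly the machinery the paper deploys in \S 2--3 --- the sums $P(\alpha)$ and $F(\alpha)$, the dissection into $M_1$ and $M_2$, the local factors $V(a,q)$ and the treatment of the exceptional zero $\beta$ --- but the paper uses it only to extract \emph{upper bounds} on $\mathcal{R}(n)$ (Theorem~\ref{result1}), supplemented by Selberg and combinatorial sieves for the unconditional bound. So: your proposal does not prove the statement, no proof exists, and your explanation of why is sound; the one thing to be careful about is presentation --- a reader asked for a proof of \eqref{1} should be told at the outset that what follows is an account of why the problem is open, not a derivation.
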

In 1937, Davenport and Heilbronn \cite{3} proved that Conjecture~\ref{HL} holds for \emph{almost} all natural numbers. In fact, they showed that if we define the exceptional set as
\begin{eqnarray} \label{3}
E(N)&:=& \{n \le N: \text{$n$ is neither a square}\nonumber\\
&&\,\, \text{ nor the sum of a prime and a square}\},
\end{eqnarray}
then
\begin{equation} \label{4a}
|E(N)| \ll N \log^{c} N
\end{equation}
for some $c < 0$. In 1968, Miech \cite{6} proved that \eqref{4a} holds for arbitrary $c < 0$. Using the approach of Hardy and Littlewood \cite{1} via theta-functions, Vinogradov \cite{14} proved that there exist \emph{effectively computable} constants $c < 1$ and $\gamma > 0$ such that $|E(N)| \le \gamma N^{c}$. Br\"{u}nner, Perelli, and Pintz \cite{2} used the methods of Montgomery and Vaughan \cite{8} to prove the same result. Polyakov \cite{5} independently demonstrated that the results of Vinogradov \cite{14} hold without using Siegel's theorem, thereby enabling him to attain effective results. Basing his methods on those of \cite{2}, Wang \cite{9} made the computation of $c$ more rigorous and proved,
\begin{equation}
|E(N)| \ll N^{0.99}.
\end{equation}
The exponent was subsequently improved by Li \cite{10} to 0.9819. \newline
\indent Some work has been done in an attempt to the verify the asymptotic formula for $\mathcal{R}(n)$ in Conjecture~\ref{HL}. Miech \cite{6} proved that
\begin{equation} \label{Miech}
\mathcal{R}(n) = \mathscr{P}(n)\frac{\sqrt{n}}{\log n}\left(1+O\left(\frac{\log \log n}{\log n}\right)\right)
\end{equation}
holds for all but $O(N(\log N)^{A})$ positive integers $n \le N$ with any fixed $A < 0$. Polyakov also attempted to make progress on Conjecture~\ref{HL} \cite{4} \cite{5}. For all but $\ll N\cdot\exp\{-c\sqrt{\log N}\}$ integers $n \le N$, he obtained the following
\begin{equation} \label{5}
\mathcal{R}(n) = \mathscr{P}(n)\frac{\sqrt{n}}{\log n}\left(1+O\left(\exp\Big\{-\frac{\sqrt{\log n}}{\log^3\log n}\Bigr\}\right)\right).
\end{equation}
Unfortunately, a mistake occurs in one of Polyakov's estimates \cite{4}, and ``due to the possible existence of the Siegel zero, such a result is unlikely to be provable in the present state of knowledge'' \cite[pp. 347-8]{2}. \newline
\indent In this paper, the first upper bound we prove for $\mathcal{R}(n)$ holds for \emph{all} $n \le N$ ($N$ sufficiently large) with no exceptions. The second upper bound is achieved by way of Polyakov's \cite{5} methods, although he uses his methods for the entirely different purpose of determining the cardinality of the exceptional set $E(N)$. This second upper bound assumes the possible existence of the Siegel zero, and consequently has an exceptional set. However, before we present the main results of this paper, we first define some nomenclature.
\subsection{Notation}
We will use some of the same notation used by Polyakov \cite{5} for simplicity: \newline
\indent Suppose $m$ and $u$ are natural numbers; $p$ is a prime; $N$ is a sufficiently large positive integer; $\mu$ is the M\"{o}bius function; $\varphi$ is Euler's totient function; $\mathscr{P}(n):= \prod_{p=3}^{\infty}\left(1-\frac{1}{p-1}\left(\frac{n}{p}\right)\right)$ where $(n/p)$ is the Legendre symbol; $0 < \delta \le 0.0025$; $0 < \delta_{1} \le 0.000025$; $\epsilon > 0$; $Q=N^{\epsilon\delta}$; $\tau = N^{1-46\delta}$; $s = \sigma + it$ is a complex variable; $c,c_1,c_2, \ldots$ are absolute positive constants; $\chi$ is a Dirichlet character $\mod q$; $\chi_{0}$ is the principal character $\mod q$; $\chi^{*}$ is the primitive character corresponding to $\chi$; $\sum_{\chi}$ is the summation over all characters $\mod q$; $\sum_{a\le q}{^{*}}$ is the summation over a reduced system of residues $\mod q$; $L(s,\chi) = \sum_{n=1}^{\infty}\frac{\chi(n)}{n^s}$ is the $L$-function defined for $\sigma>1$; $\beta$, also known as a Siegel zero, is an exceptional real zero (if it exists) in the region $\Re(s) \ge c \log^{-1} Q$ for the $L$-function $L(s,\tilde{\chi})$ with the real primitive character $\tilde{\chi} \mod \tilde{r}$ where $\tilde{r} \le Q = \exp\{\log^{1/2} N\}$; $\epsilon(\beta) = \epsilon(\beta,Q)$ is a function equal to 1 if $\beta$ exists and is equal to 0 if otherwise; $\alpha$ and $x$ are real variables; $e(x) = \exp\{2\pi i x\}$; $B$ is a bounded quantity whose absolute value is bounded above by some constant that is independent of $n$ and $N$.
\subsection{Main Theorem}
Now we are ready to state the central results of this paper.
\begin{thm} \label{result1}
For all $n \le N$,
\begin{equation} \label{A}
\mathcal{R}(n) \le 2\cdot\mathscr{P}(n)\frac{N}{\log N}\left(1+O\left(\frac{\log\log 3N}{\log N}\right)\right).
\end{equation}
For all $N/2 < n \le N$ except for at most $\ll N^{1-\delta_1}$ of these integers, if the Siegel zero $\beta$ does indeed exist,
\begin{equation} \label{6}
\mathcal{R}(n) \ll_{N} \mathscr{P}(n)N^{1/2}\exp\Big\{\frac{-c}{\delta}\Bigr\}(1-\beta)\log N.
\end{equation}
If the Siegel zero does not exist, then the $(1-\beta)\log N$ term in \eqref{6} is removed and the resultant bound holds for all $N/2 < n \le N$ except for at most $\ll N^{1-\delta}$ of these integers. Note that $\delta$ and $\delta_1$ are fixed, where $0 < \delta \le 0.0025$ and $0 < \delta_{1} \le 0.000025$, as defined in \emph{\S 1.2}.
\end{thm}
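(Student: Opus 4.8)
The plan is to prove the two estimates by genuinely different routes, since they have different scopes: the unconditional bound \eqref{A}, valid for \emph{every} $n \le N$, by an upper-bound sieve, and the conditional bound \eqref{6} by the Hardy--Littlewood circle method in the style of Polyakov, where both the exceptional set and the dependence on the Siegel zero $\beta$ will emerge naturally.

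For \eqref{A} I would avoid the circle method entirely, precisely because pointwise control of the minor arcs for \emph{every} $n$ is what one cannot expect. Instead, interpret $\mathcal{R}(n)$ as the number of $m$ with $m^2 < n$ for which the shift $n - m^2$ is prime, and apply Selberg's upper-bound sieve to the finite sequence $\mathcal{A} = \{\, n - m^2 : |m| \le \sqrt{n}\,\}$. The only arithmetic input is the local density: for an odd prime $p \nmid n$ the congruence $m^2 \equiv n \pmod p$ has $1 + \left(\frac{n}{p}\right)$ solutions, so the sieve weight is $\omega(p) = 1 + \left(\frac{n}{p}\right)$. Feeding this in, the product $\prod_{p \le z}\left(1 - \omega(p)/p\right)$ factors, after comparison with $\prod_{p \le z}(1 - 1/p)$ through Mertens' theorem, into a multiple of $\mathscr{P}(n)/\log z$ up to the claimed $O(\log\log 3N/\log N)$ error. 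Choosing the sifting level $z = n^{1/4}$, just below the square root of the primality threshold $\sqrt{n}$, incurs the familiar loss $\log\sqrt{n}/\log z = 2$, which is exactly the Brun--Titchmarsh constant appearing in \eqref{A}; assembling the pieces then yields the stated bound. This step is routine once the density $1 + \left(\frac{n}{p}\right)$ is in hand.

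For \eqref{6} I would write $\mathcal{R}(n) = \int_0^1 S(\alpha)\,\vartheta(\alpha)\,e(-n\alpha)\,d\alpha$, with $S(\alpha)$ the (von Mangoldt--weighted) exponential sum over primes and $\vartheta(\alpha) = \sum_{m^2 < n} e(m^2\alpha)$ the attached theta sum, and cut $[0,1]$ into major arcs about $a/q$ with $q \le Q = N^{\epsilon\delta}$ and minor arcs $\mathfrak{m}$, the parameter $\tau = N^{1-46\delta}$ of \S1.2 fixing the arc widths. On the major arcs the contribution is evaluated through the Dirichlet $L$-functions $L(s,\chi)$ and the explicit formula; the principal term reproduces a multiple of $\mathscr{P}(n)\,n^{1/2}$, but when the exceptional character $\tilde\chi \bmod \tilde r$ carries a Siegel zero $\beta$ there is a secondary term of size $\mathscr{P}(n)\,n^{1/2}\,n^{\beta-1}$ that very nearly cancels the principal one. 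Since $n^{\beta - 1} = 1 - (1-\beta)\log n + \cdots$, what survives is proportional to $(1-\beta)\log n$, which is precisely the factor in \eqref{6}; the multiplier $\exp\{-c/\delta\}$ is the zero-free-region saving $\exp\{-c\log N/\log Q\}$ specialized to $\log Q = \epsilon\delta\log N$.

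The minor arcs are where the exceptional set is born, and they are also the main obstacle. Following Polyakov, I would not attempt a pointwise minor-arc bound but instead estimate the mean square $\int_{\mathfrak{m}}|S(\alpha)\vartheta(\alpha)|^2\,d\alpha$ and, by a Bessel-type (large-sieve) argument over $n \le N$, deduce that the set of $n$ for which the minor-arc integral exceeds the admissible threshold has cardinality $\ll N^{1-\delta_1}$ with $0 < \delta_1 \le 0.000025$. For all remaining $n$ the minor arcs are absorbed into the error and the major-arc evaluation above delivers \eqref{6}. The delicate points, and the reason \eqref{6} must be conditional, are twofold: one must track the secondary term attached to $\tilde\chi$ carefully enough to show it leaves exactly the factor $(1-\beta)$ rather than an uncontrolled cancellation, and the mean-value estimate on $\mathfrak{m}$ must be strong enough to force the exceptional set below $N^{1-\delta_1}$ while respecting the explicit constants $\delta,\delta_1$ fixed in \S1.2.
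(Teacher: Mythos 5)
Your two-pronged strategy --- an upper-bound sieve for the unconditional estimate \eqref{A}, and a Polyakov-style circle method with explicit tracking of the Siegel zero for \eqref{6} --- is exactly the architecture of the paper. For \eqref{6} your outline matches the paper step for step: the decomposition $R(n)=R_1(n)+R_2(n)$ with the minor arcs controlled in mean square via Parseval (Lemma~\ref{lemma1}), which is where the exceptional set of size $\ll N^{1-\delta_1}$ arises; the singular series evaluated as $\mathscr{P}(n)+Bn^{-2\delta_1}$ (Lemma~\ref{lemma6}); the factor $(1-\beta)\log N$ extracted from $u^{\beta-1}\le 1-c_3(1-\beta)\log n$ and from the Montgomery--Vaughan character-sum estimate \eqref{33}, which also supplies the factor $\exp\{-c/\delta\}$. (The paper draws part of its exceptional set from two further sources, Lemmas~\ref{lemma6} and~\ref{lemma7}, not only from the minor arcs, but this is a detail.) The one place you genuinely diverge is the sieve step for \eqref{A}: you fix $n$ and sift $\{\,n-m^2 : |m|\le\sqrt n\,\}$ over $m$ with local density $1+\left(\frac{n}{p}\right)$, whereas the paper applies Theorem 5.3 of Halberstam--Richert to the family $\{\,n-m^k : N-Y<n\le N\,\}$, i.e.\ it sifts over $n$, and then specializes $k=2$, $Y=N$. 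Your version is the more natural one for bounding $\mathcal{R}(n)$ pointwise, and with level $D=n^{1/2-\epsilon}$ and sifting limit $z=D^{1/2}$ it returns the sharper $(2+o(1))$ times a constant multiple of $\mathscr{P}(n)\sqrt{n}/\log n$, which is of the conjectured order of magnitude and in particular implies the stated bound $2\mathscr{P}(n)N/\log N$ for $n\le N$; the paper's formulation, as written, is weaker by a factor of roughly $\sqrt N$. Both routes rest on the same arithmetic input, namely the solution count of $m^2\equiv n\pmod p$, so I would classify your proposal as the same method executed on the more appropriate sequence rather than a different method.
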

\indent Although we have not proven Conjecture~\ref{HL} (which is unlikely to be proven in the current state of knowledge anyway), our results are of interest.
The upper bounds in Theorem~\ref{result1} involve $\mathscr{P}(n)$ (just like Conjecture~\ref{HL} does), the treatment of the conjectural Siegel zero is explicit, the cardinality of the exceptional set in \eqref{6} is rather small since it is contained within the cardinality of the exceptional set given by Br\"{u}nner, Perelli, and Pintz \cite{2} (see our \S 1.1). Moreover, under the likely assumption that the Siegel zero $\beta$ does not exist, the upper bound in \eqref{A} has no exceptions.

\section{Preliminaries}
In order to prove Theorem~\ref{result1}, we introduce some auxiliary functions and lemmas about these functions. We should inform the reader that any lemma that is presented without proof in this paper means that it has already been stated and proven by Polyakov \cite{5}.
Put
\begin{equation} \label{8}
\begin{split}
P(\alpha)&:=\sum_{Q<p\le N}\log pe(p\alpha) \\
F(\alpha)&:=\sum_{{\sqrt{N}/2}<n\le \sqrt{N}}e(m^2\alpha) \\
R(n)&:={\sum_{\substack{Q < p \le N\\{n=p+m^2}}}\sum_{\sqrt{N}/2 < m \le \sqrt{N}}}\log p.
\end{split}
\end{equation}
Note that $R(n)$ and $\mathcal{R}(n)$ are easily related by partial summation. An upper bound of $\ll$-type on $R(n)$ is applicable to $\mathcal{R}(n)$.
Thus, for $N/2 < n \le N$,
\begin{equation} \label{9}
R(n) = \int_{0}^{1}P(\alpha)F(\alpha)e(-n\alpha)d\alpha = \int_{-1/\tau}^{1-1/\tau} P(\alpha)F(\alpha)e(-n\alpha)d\alpha.
\end{equation}
\indent Dirichlet's approximation theorem leads us to the notion that each
$\alpha \in [-1/\tau, 1-1/\tau]$ can be represented in the form $\alpha=\frac{a}{q}+z$ for $1 \le q \le z$, $\gcd(a,q)=1$, and $|z| \le \frac{1}{q\tau}$.
Let
\begin{equation} \label{10}
\begin{split}
M_1&:=\{\alpha\in[-1/\tau, 1-1/\tau]: \text{for which~} q \le Q\\
&\qquad \text{ is in the indicated representation}\} \\
M_2&:=\{\alpha\in[-1/\tau, 1-1/\tau]: \text{for which~} q \le Q\\
&\qquad \text{ is not in the indicated representation}\}.
\end{split}
\end{equation}
\indent Now, put $R(n) = R_1(n) + R_2(n)$, where
\begin{equation} \label{11}
\begin{split}
R_1(n) = \int_{M_1}P(\alpha)F(\alpha)e(-n\alpha)d\alpha \\
R_2(n) = \int_{M_2}P(\alpha)F(\alpha)e(-n\alpha)d\alpha.
\end{split}
\end{equation}
\begin{lem} \label{lemma1}
For all $N/2 < n \le N$ except for $\ll N^{1-\delta}$ integers $n$,
\begin{equation} \label{12}
R_2(n) \ll n^{1/2-2\delta}.
\end{equation}
\end{lem}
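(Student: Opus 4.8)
The plan is to control $R_2(n)$ not pointwise but in mean square over $n\in(N/2,N]$, and then to extract the exceptional set by a Chebyshev (second moment) argument. Since each
$R_2(n)=\int_{M_2}P(\alpha)F(\alpha)e(-n\alpha)\,d\alpha$
is the $n$-th Fourier coefficient of the function $P(\alpha)F(\alpha)\mathbf 1_{M_2}(\alpha)$ on $\R/\Z$ (recall $M_2\subset[-1/\tau,1-1/\tau]$, an interval of length $1$), Parseval's identity, after discarding the frequencies outside $(N/2,N]$, gives
\[
\sum_{N/2<n\le N}|R_2(n)|^2 \;\le\; \int_{M_2}|P(\alpha)|^2\,|F(\alpha)|^2\,d\alpha .
\]
I would then separate the two factors, keeping the prime sum $P$ in $L^2$ and the square sum $F$ in $L^\infty$:
\[
\int_{M_2}|P|^2|F|^2\,d\alpha \;\le\; \Bigl(\sup_{\alpha\in M_2}|F(\alpha)|^2\Bigr)\int_0^1|P(\alpha)|^2\,d\alpha .
\]
Keeping $P$ in $L^2$ is essential: by orthogonality $\int_0^1|P|^2\,d\alpha=\sum_{Q<p\le N}(\log p)^2\ll N\log N$ by Chebyshev's prime bound, whereas the reverse split (using $\sup_{M_2}|P|$ against $\int_0^1|F|^2\asymp\sqrt N$) loses an extra factor $\sqrt N$ and cannot reach \eqref{12}.

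The heart of the matter is the $L^\infty$ bound for the quadratic exponential sum $F$ on the minor arcs. On $M_2$ the Dirichlet approximation $\alpha=a/q+z$ has $Q<q\le\tau$ and $|z|\le 1/(q\tau)\le 1/q^2$ (since $q\le\tau$), so Weyl's inequality for $\sum_{\sqrt N/2<m\le\sqrt N}e(m^2\alpha)$ applies and yields a bound of the shape
\[
|F(\alpha)|\;\ll\; N^{o(1)}\bigl(N^{1/2}q^{-1/2}+N^{1/4}+q^{1/2}\bigr).
\]
For $q$ near the lower end the first term dominates, giving $\ll N^{1/2-\epsilon\delta/2+o(1)}$, while for $q$ near $\tau$ the third term dominates, giving $\ll\tau^{1/2}=N^{1/2-23\delta}$. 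Hence $\sup_{M_2}|F|\ll N^{1/2-\eta+o(1)}$ with $\eta=\min(\epsilon\delta/2,\,23\delta)$, and combining with the two displays above,
\[
\sum_{N/2<n\le N}|R_2(n)|^2 \;\ll\; N^{2-2\eta+o(1)} .
\]

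Finally I would apply Chebyshev's inequality. The number of $n\in(N/2,N]$ for which $|R_2(n)|>N^{1/2-2\delta}$ is at most
\[
N^{-(1-4\delta)}\sum_{N/2<n\le N}|R_2(n)|^2\;\ll\; N^{1+4\delta-2\eta+o(1)} .
\]
Declaring these the exceptional integers, every remaining $n>N/2$ satisfies $R_2(n)\ll N^{1/2-2\delta}\ll n^{1/2-2\delta}$, which is \eqref{12}. The parameters must be arranged so that (i) $2\eta-4\delta>\delta_1$, so that the exceptional count is $\ll N^{1-\delta_1}$, and (ii) $\eta\le 23\delta$, so that the $q^{1/2}$ term from $q$ up to $\tau$ does no harm; with $\eta=\epsilon\delta/2$ both reduce to $4+\delta_1/\delta<\epsilon\le 46$, which is satisfiable because $\delta_1/\delta\le 0.01$ for the admissible ranges of $\delta,\delta_1$ fixed in \S 1.2.

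The step I expect to be the main obstacle is the uniform $L^\infty$ control of $F$ over the \emph{entire} minor-arc range $Q<q\le\tau$: one must secure a genuine power saving $N^{-\eta}$ in the quadratic Weyl sum, and the large-$q$ end (where $q^{1/2}$ competes with $N^{1/2}q^{-1/2}$) is the delicate regime. Once that bound is in hand, the remaining difficulty is purely bookkeeping, namely checking that the single free exponent $\epsilon$ can be chosen to defeat the two competing constraints (i) and (ii) simultaneously while respecting the fixed values of $\delta$ and $\delta_1$.
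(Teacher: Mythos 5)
Your proposal follows essentially the same route as the paper: bound $\sum_{n}R_2(n)^2$ via Parseval by $\bigl(\sup_{M_2}|F(\alpha)|^2\bigr)\int_0^1|P(\alpha)|^2\,d\alpha \ll N^{2-5\delta}$ and then extract the exceptional set by a Chebyshev second-moment argument, exactly as in Lemma~\ref{lemma1}; the only difference is that the paper outsources the minor-arc power saving on $F$ to Polyakov, whereas you supply it explicitly via Weyl's inequality. (Note that the paper's display \eqref{13} writes $\max_{M_1}$, which must be a typo for $M_2$ --- your version is the correct one.)
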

\begin{proof}
By using Parseval's identity to show that
\begin{equation} \label{13}
\sum_{n \le N}R_2^2(n) \le \max_{M_1}|F(\alpha)|^2 \int_0^1 |P(\alpha)|^2 d\alpha
\end{equation}
where
\begin{equation} \label{14}
\int_0^1 |P(\alpha)|^2 d\alpha= \sum_{Q<p\le N}\log^2 N \ll N \log N,
\end{equation}
Polyakov \cite{5} proves that
\begin{equation} \label{polyakovparsv}
\sum_{n\le N}R_2^2(n) \le N^{2-5\delta},
\end{equation}
which implies our lemma.
\end{proof}
\indent As Polyakov \cite{5} mentions, the following lemma is due to Karatsuba \cite[Ch. IX, Sec. 2]{7}.
\begin{lem} \label{lemma2}
There exists a constant $c > 0$ such that $L(s,\chi^*) \ne 0$ for $\sigma \ge 1-c\log^{-1}Q$ and for all primitive characters $\chi^* \mod r$, where $r \le Q$ and $Q \ge 2$, with the possible exception of at most one primitive character $\tilde{\chi} \mod \tilde{r}$. If this character exists, then it is a quadratic character and the unique Siegel zero $\beta$ for the $L$-function $L(s, \tilde{\chi})$ satisfies
\begin{equation} \label{15}
c_1{{\tilde{r}}^{-1/2}}\log^{-2} \tilde{r} \le 1-\beta \le c \log^{-1} Q.
\end{equation}
Also, if there are any $L(s,\chi)$, where $\chi$ is a real character $\mod q$, such that $L(\beta,\chi) = 0$ in \eqref{15}, then $q \equiv 0\pmod{\tilde{r}}$.
\end{lem}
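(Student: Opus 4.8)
The plan is to follow the classical Landau--Page--Siegel theory of zero-free regions for Dirichlet $L$-functions. The engine is the elementary inequality $3 + 4\cos\theta + \cos 2\theta = 2(1+\cos\theta)^2 \ge 0$, applied to the Dirichlet coefficients of a suitable product of $L$-functions. Writing $-\tfrac{L'}{L}(s,\chi) = \sum_{n} \Lambda(n)\chi(n) n^{-s}$ and using that $\chi^*(n)n^{-it}$ has modulus $1$ when $(n,r)=1$, the nonnegativity above gives, for $\sigma > 1$,
\[
-\Re\!\left( 3\frac{\zeta'}{\zeta}(\sigma) + 4\frac{L'}{L}(\sigma+it,\chi^*) + \frac{L'}{L}(\sigma+2it,(\chi^*)^2)\right) \ge 0 .
\]
First I would combine this with the standard estimates $\tfrac{\zeta'}{\zeta}(\sigma) = -\tfrac{1}{\sigma-1} + O(1)$ and, for any primitive $\psi \bmod r$, the partial-fraction (Hadamard-factorization) bound $-\Re \tfrac{L'}{L}(s,\psi) \le O(\log(r(|t|+2))) - \sum_\rho \Re\tfrac{1}{s-\rho}$, in which every term $\Re\tfrac{1}{s-\rho}$ is nonnegative for $\sigma$ just to the right of the critical strip.

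The second step extracts the zero-free region. Suppose $\rho = \beta + i\gamma$ is a nontrivial zero of $L(s,\chi^*)$. Taking $t=\gamma$ above and retaining, in the zero-sum for $L(s,\chi^*)$, only the single nonnegative term $\Re\tfrac{1}{\sigma+i\gamma-\rho} = \tfrac{1}{\sigma-\beta}$, I would arrive at an inequality of the shape
\[
\frac{4}{\sigma - \beta} \le \frac{3}{\sigma-1} + c_2 \log\big(r(|\gamma|+2)\big) + \mathcal{E},
\]
where the extra term $\mathcal{E} = (\sigma-1)^{-1}$ is present exactly when $\chi^*$ is real and $\gamma$ is near $0$ (so $(\chi^*)^2 = \chi_0$ and $L(s,(\chi^*)^2)$ inherits the pole of $\zeta$), and $\mathcal{E} = O(\log(r(|\gamma|+2)))$ otherwise. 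Optimizing $\sigma = 1 + A/\log Q$ then produces a constant $c>0$ with $L(s,\chi^*)\ne 0$ for $\sigma \ge 1 - c\log^{-1}Q$ whenever $\chi^*$ is complex, or $\chi^*$ is real but $|\gamma|$ is bounded away from $0$; the coefficients $3+1$ no longer beat the $4$ on the left only in the remaining case, namely a single real zero $\beta$ of $L(s,\chi^*)$ with $\chi^*$ real and $\gamma=0$. This is the candidate Siegel zero.

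The third step is uniqueness together with the two-sided bound \eqref{15} and the divisibility claim. For uniqueness (Page's repulsion) I would suppose $\chi_1 \bmod r_1$ and $\chi_2 \bmod r_2$ are distinct real primitive characters each with a real zero near $1$, and exploit the nonnegativity of the Dirichlet coefficients of $\zeta(s)L(s,\chi_1)L(s,\chi_2)L(s,\chi_1\chi_2)$: since $\chi_1\chi_2 \ne \chi_0$, this product has only the simple pole of $\zeta$ at $s=1$ but a zero of order $\ge 2$ at the common point, and evaluating near $s=1$ forces the two zeros apart unless the characters coincide, leaving at most one exceptional $\tilde\chi \bmod \tilde r$. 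The upper bound $1-\beta \le c\log^{-1}Q$ is immediate from the would-be zero-free region. For the lower bound, since $L(\beta,\tilde\chi)=0$ the mean value theorem gives $L(1,\tilde\chi) = (1-\beta)\,|L'(\xi,\tilde\chi)|$ for some $\xi\in(\beta,1)$; bounding $|L'(\xi,\tilde\chi)| \ll \log^2\tilde r$ and invoking Dirichlet's \emph{effective} lower bound $L(1,\tilde\chi) \gg \tilde r^{-1/2}$ yields $1-\beta \gg \tilde r^{-1/2}\log^{-2}\tilde r$. Finally, if $\chi \bmod q$ is real with $L(\beta,\chi)=0$, factoring $L(s,\chi) = L(s,\chi^*)\prod_{p\mid q}\big(1-\chi^*(p)p^{-s}\big)$ and noting each finite factor is nonzero for $\sigma>0$ (as $|\chi^*(p)|\le 1 < p^{\beta}$) forces $L(\beta,\chi^*)=0$ for the inducing primitive real $\chi^*$; the repulsion phenomenon then identifies $\chi^* = \tilde\chi$, and since $\chi^*$ induces $\chi$ we get $\tilde r \mid q$, i.e.\ $q \equiv 0 \pmod{\tilde r}$.

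The hard part will be the uniqueness/repulsion step: controlling the interaction of two hypothetical exceptional zeros through the auxiliary product $L$-function, and extracting a clean contradiction, requires care with the sizes of $L(1,\chi_i)$ and with the precise constants, and it is here that the ``at most one'' conclusion --- hence the well-definedness of $\tilde\chi$ and the divisibility statement --- is genuinely earned. The lower bound in \eqref{15} is the other delicate point, since its $\tilde r^{-1/2}$ dependence must be drawn from Dirichlet's effective estimate rather than from Siegel's ineffective theorem.
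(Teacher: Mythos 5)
The paper offers no proof of this lemma at all: it is quoted verbatim from Karatsuba \cite{7} (and the paper's stated convention is that unproved lemmas are taken from the literature). Your sketch is the standard Landau--Page--Dirichlet argument --- the $3+4\cos\theta+\cos 2\theta\ge 0$ device for the zero-free region, the auxiliary product $\zeta L(\cdot,\chi_1)L(\cdot,\chi_2)L(\cdot,\chi_1\chi_2)$ for uniqueness, the class-number-formula bound $L(1,\tilde\chi)\gg\tilde r^{-1/2}$ with $|L'|\ll\log^2\tilde r$ for the lower bound in \eqref{15}, and the Euler-factor argument for the divisibility claim --- which is precisely the treatment in the cited source, and it is correct in outline.
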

Next, put
\begin{equation} \label{16}
\begin{split}
V(a,q) &:= \sum_{1 \le m \le q} e\left(m^2 \frac{a}{q}\right) \\
K(z) &:= \sum_{N/4 < m \le N} \frac{e(mz)}{2\sqrt{m}}.
\end{split}
\end{equation}
Thus, $R_1(n) = R_1^{(1)}(n) + R_1^{(2)}(n) + R_1^{(3)}(n)$ if
\begin{equation*} \label{17}
\begin{split}
R_1^{(1)}(n)& =	\Bigg( \sum_{q \le Q}\frac{\mu(q)}{q\varphi(q)} \sum_{a \le q}{^{*}}V(a,q)e\left(-n \frac{a}{q}\right) \Bigg)\\
			&	\times \Bigg( \int_{-1/(q\tau)}^{1/(q\tau)} T(z,1)K(z)e(-nz)dz \\
            & \quad   - \epsilon(\beta)
			 	\sum_{\substack{q \le Q \\ q \equiv 0\pmod{ \tilde{r}}}} \frac{\tau(\tilde{\chi}\chi_{0})}{q\varphi(q)}
				\sum_{a \le q}{^{*}}V(a,q)\tilde{\chi}(a) \Bigg)\\
			&	\times \Bigg( e\left(-n \frac{a}{q}\right)\int_{-1/(q\tau)}^{1/q(\tau)}T(z,\beta)K(z)e(-nz)dz \Bigg)
\end{split}
\end{equation*}
\begin{equation*}
\begin{split}
R_1^{(2)}(n)&= \sum_{q \le Q}\sum_{a \le q}{^{*}}\int_{-1/(q\tau)}^{1/(q\tau)}P\left(\frac{a}{q}+z\right)\left(F\left(\frac{a}{q}+z\right)-\frac{V(a,q)}{q}K(z)\right)\\
& \quad e\left(-n\left(\frac{a}{q}+z\right)\right)dz
\end{split}
\end{equation*}
\begin{equation*}
\begin{split}
R_1^{(3)}(n) &= \sum_{q \le Q}\frac{1}{q\varphi(q)}\sum_{a \le q}{^{*}}V(a,q)e\left(-n \frac{a}{q}\right)\sum_{\chi}\chi(a)\tau(\tilde{\chi})\\
& \quad \times \int_{-1/(q\tau)}^{1/(q\tau)}K(z)W(\chi,z)e(-nz)dz,
\end{split}
\end{equation*}
where, as defined by Montgomery and Vaughan \cite{8},
\begin{equation*}
T(z,\gamma) = \sum_{Q<u \le N}u^{\gamma-1}e(uz)
\end{equation*}
and
\begin{equation*}
W(\chi,z) = \sum_{Q<p \le N}\chi(p)\log p e(pz)
\end{equation*}
in which $\chi \ne \chi_0$ and $\chi \ne \tilde{\chi}\chi_0$, and $\tau(\chi)$ is the Gauss sum. \newline
\indent In order to develop an upper bound for $R_1(n)$ we need three more lemmas.
\begin{lem} \label{lemma3}
For $N/2 < n \le N$,
\begin{align} \label{18}
&R_1^{(1)}(n)\nonumber\\
& = \Sigma(n,Q)\sum_{n=u+m}\frac{1}{2\sqrt{m}}-\epsilon(\beta)\Sigma(n,Q,\beta)\sum_{n=u+m}\frac{u^{\beta-1}}{2\sqrt{m}}+BN^{1/2-34\delta}\nonumber\\
\end{align}
where
\begin{equation} \label{19}
\Sigma(n,Q) = \sum_{q \le Q}\frac{\mu(q)}{q\varphi(q)}\sum_{a \le q}{^{*}}V(a,q)e\left(-n\frac{a}{q}\right)
\end{equation}
\begin{equation} \label{20}
\Sigma(n,Q,\beta)=\sum_{\substack{q \le Q \\ q \equiv 0\pmod{\tilde{r}}}}\frac{\tau(\tilde{\chi}\chi_0)}{q\varphi(q)}\sum_{a \le q}{^{*}}V(a,q)\tilde{\chi}(a)e\left(-n\frac{a}{q}\right).
\end{equation}
\end{lem}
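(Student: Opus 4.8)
The plan is to unwind the definition of $R_1^{(1)}(n)$ into the two complete arithmetic factors $\sum(n,Q)$ and $\sum(n,Q,\beta)$ of \eqref{19}--\eqref{20}, each multiplied by a truncated analytic integral of the shape $\int_{-1/(q\tau)}^{1/(q\tau)}T(z,\gamma)K(z)e(-nz)\,dz$ with $\gamma=1$ for the first factor and $\gamma=\beta$ for the Siegel-zero factor. The key observation is that each truncated integral may be replaced by the integral over a full period at the cost of an error absorbed into $BN^{1/2-34\delta}$, while the complete integral is evaluated \emph{exactly} by orthogonality.

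First I would record that $T(z,\gamma)=\sum_{Q<u\le N}u^{\gamma-1}e(uz)$ and $K(z)=\sum_{N/4<m\le N}\frac{1}{2\sqrt{m}}e(mz)$ are trigonometric polynomials, so that integrating their product against $e(-nz)$ over a full period isolates the diagonal $u+m=n$:
\begin{equation*}
\int_0^1 T(z,\gamma)K(z)e(-nz)\,dz=\sum_{n=u+m}\frac{u^{\gamma-1}}{2\sqrt{m}},
\end{equation*}
the sum running over $Q<u\le N$ and $N/4<m\le N$. Taking $\gamma=1$ produces the factor $\sum_{n=u+m}\frac{1}{2\sqrt{m}}$ paired with $\sum(n,Q)$, and $\gamma=\beta$ produces $\sum_{n=u+m}\frac{u^{\beta-1}}{2\sqrt{m}}$ paired with $\sum(n,Q,\beta)$; together these give the two main terms of \eqref{18}.

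Next I would bound the tail. Writing $\int_{-1/(q\tau)}^{1/(q\tau)}=\int_0^1-\int_{1/(q\tau)}^{1-1/(q\tau)}$ by periodicity, it remains to estimate the tail integral, where the distance $\|z\|$ from $z$ to the nearest integer satisfies $1/(q\tau)\le\|z\|\le 1/2$. On this range the usual major-arc bounds apply: summing the geometric series gives $|T(z,\gamma)|\ll\min\left(N,\|z\|^{-1}\right)$, while partial summation against the smooth weight $\frac{1}{2\sqrt{m}}$ gives $|K(z)|\ll\min\left(N^{1/2},N^{-1/2}\|z\|^{-1}\right)$. Since $q\tau\le Q\tau<N$, both minima are attained by their second arguments throughout the tail, so inserting these bounds and integrating yields a contribution $\ll N^{-1/2}q\tau$ for each $q$ (up to logarithmic factors). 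I would then sum over $q\le Q$ against the arithmetic weight $\frac{|\mu(q)|}{q\varphi(q)}\bigl|\sum_{a\le q}^{*}V(a,q)e(-na/q)\bigr|$, using the quadratic Gauss-sum bound $V(a,q)\ll q^{1/2}$ to see that this weight is $\ll q^{-1/2}$. The total tail error is therefore $\ll N^{-1/2}\tau\sum_{q\le Q}q^{1/2}\ll N^{-1/2}\tau Q^{3/2}$, which with $\tau=N^{1-46\delta}$ and $Q=N^{\epsilon\delta}$ is $\ll N^{1/2-46\delta+\frac{3}{2}\epsilon\delta}\ll N^{1/2-34\delta}$ once $\epsilon$ is taken sufficiently small.

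The step I expect to be the main obstacle is this last one: pinning down the exponent $34\delta$ exactly. This demands careful bookkeeping of the interaction between the Gauss-sum cancellation in $V(a,q)$, the arithmetic weight $1/(q\varphi(q))$, and the chosen sizes $Q=N^{\epsilon\delta}$ and $\tau=N^{1-46\delta}$; the numerology only closes because these parameters were fixed in \S1.2 precisely so that the accumulated tail contribution stays below $N^{1/2-34\delta}$. A secondary technical point is handling the Siegel-zero factor, where the extra weight $u^{\beta-1}$ is bounded by $1$ and monotone in $u$, so the same geometric-series and partial-summation arguments give $|T(z,\beta)|\ll\min\left(N,\|z\|^{-1}\right)$ with no new loss, while the Gauss sum $\tau(\tilde{\chi}\chi_0)\ll q^{1/2}$ keeps the corresponding arithmetic weight under control.
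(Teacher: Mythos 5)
The paper contains no proof of this lemma: by the convention announced at the start of \S 2, any lemma stated without proof is quoted from Polyakov [5], so there is no in-paper argument to compare yours against. Judged on its own terms, your route is the standard one and is essentially correct. You read the (typographically garbled) definition of $R_1^{(1)}(n)$ the right way, namely as a difference of two sums over $q\le Q$ of arithmetic weights times truncated integrals of $T(z,\gamma)K(z)e(-nz)$; completing each integral to a full period makes the analytic factor independent of $q$, so the weights reassemble into $\sum(n,Q)$ and $\sum(n,Q,\beta)$, and orthogonality evaluates the completed integral exactly as $\sum_{n=u+m}u^{\gamma-1}/(2\sqrt{m})$ (with the ranges $Q<u\le N$, $N/4<m\le N$ understood, as you note). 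Your tail bounds $|T(z,\gamma)|\ll\min(N,\|z\|^{-1})$ (using monotonicity of $u^{\beta-1}$ for the Siegel term), $|K(z)|\ll N^{-1/2}\|z\|^{-1}$, and $\frac{1}{q\varphi(q)}\sum_{a\le q}^{*}|V(a,q)|\ll q^{-1/2}$ are all correct, giving a total error $\ll N^{-1/2}\tau Q^{3/2}=N^{1/2-46\delta+\frac{3}{2}\epsilon\delta}$.

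One comment on the step you flag as the main obstacle: the numerology is not actually tight. Your error is below $N^{1/2-34\delta}$ as soon as $\epsilon<8$, with a margin of $N^{12\delta}$ to absorb logarithms, so nothing delicate is needed to ``pin down'' the exponent $34\delta$; that exponent is simply the bookkeeping constant Polyakov carries through all three pieces $R_1^{(1)},R_1^{(2)},R_1^{(3)}$, and the completion error here is far smaller than it. The only genuine care point is the one you already identify for the Siegel term, where $|\tau(\tilde{\chi}\chi_0)|\le\tilde{r}^{1/2}\le q^{1/2}$ keeps that arithmetic weight within the same $q^{-1/2}$ envelope.
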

\begin{lem} \label{lemma4}
It follows from the relation given in \eqref{14} that for all $N/2 < n \le N$,
\begin{equation} \label{21}
R_1^{(2)}(n) \ll N \sum_{q \le Q}\sum_{a \le q}{^{*}}\int_{-1/(q\tau)}^{1/(q\tau)}\left(q+\frac{N}{q\tau}\right)dz.
\end{equation}
\end{lem}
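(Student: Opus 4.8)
The plan is to bound $R_1^{(2)}(n)$ by the triangle inequality, disposing of the prime sum $P$ by a trivial sup-norm estimate and of the difference $F(\tfrac{a}{q}+z)-\tfrac{V(a,q)}{q}K(z)$ by the standard major-arc approximation for the quadratic exponential sum $F$. Since the modulating factor $e(-n(\tfrac{a}{q}+z))$ has modulus $1$, moving absolute values inside the finite sums over $q\le Q$ and over the reduced residues $a$, and inside the integral over $|z|\le 1/(q\tau)$, gives
\begin{equation*}
|R_1^{(2)}(n)| \le \sum_{q\le Q}\sum_{a\le q}{^{*}}\int_{-1/(q\tau)}^{1/(q\tau)}\Bigl|P\Bigl(\tfrac{a}{q}+z\Bigr)\Bigr|\cdot\Bigl|F\Bigl(\tfrac{a}{q}+z\Bigr)-\tfrac{V(a,q)}{q}K(z)\Bigr|\,dz.
\end{equation*}
First I would dispose of $P$ by the trivial bound $|P(\alpha)|\le\sum_{Q<p\le N}\log p\ll N$, which is the prime-counting (Chebyshev) estimate $\sum_{Q<p\le N}\log p\ll N$ that also underlies the mean-square relation \eqref{14}; being uniform in $\alpha$, it comes out of the integral and supplies the leading factor $N$ of the asserted majorant.

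The heart of the argument is the major-arc approximation
\begin{equation*}
F\Bigl(\tfrac{a}{q}+z\Bigr) = \frac{V(a,q)}{q}K(z) + O\bigl(q + N|z|\bigr),
\end{equation*}
valid uniformly for $\gcd(a,q)=1$, $q\le Q$, and $|z|\le 1/(q\tau)$. To establish it I would sort the summation variable $m\in(\sqrt{N}/2,\sqrt{N}]$ into residue classes $m\equiv b\pmod q$; since $m^2\equiv b^2\pmod q$, the arithmetic phase $e(m^2 a/q)$ factors out of each class as $e(b^2 a/q)$, and summing these over $b$ reproduces precisely the quadratic Gauss-type sum $V(a,q)$ from \eqref{16}. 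Within each class the remaining sum of $e(m^2 z)$ is compared, after the substitution $u=m^2$ that turns the density of squares into the weight $1/(2\sqrt{u})$, with the smooth sum $K(z)$ by Euler–Maclaurin summation (equivalently partial summation). The endpoints of the $q$ residue classes contribute the arithmetic error $O(q)$, while the variation of the oscillatory factor $e(uz)$ across the range $u\asymp N$ contributes the analytic error $O(N|z|)$. On the interval of integration one has $|z|\le 1/(q\tau)$, hence $N|z|\le N/(q\tau)$, so the difference is $\ll q + N/(q\tau)$ uniformly in $z$.

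Combining the two estimates inside the triangle-inequality bound above yields
\begin{equation*}
|R_1^{(2)}(n)| \ll N\sum_{q\le Q}\sum_{a\le q}{^{*}}\int_{-1/(q\tau)}^{1/(q\tau)}\Bigl(q+\frac{N}{q\tau}\Bigr)\,dz,
\end{equation*}
which is exactly \eqref{21}. I expect the main obstacle to be the approximation step: one must obtain the error $O(q+N|z|)$ uniformly and in precisely this shape, keeping the Gauss sum $V(a,q)$ as the arithmetic main factor and ensuring that the Euler–Maclaurin/partial-summation error does not acquire extra powers of $q$ or $N$ beyond those displayed. Once this approximation is in hand, the trivial treatment of $P$ and the final integration are routine.
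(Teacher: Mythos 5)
Your overall strategy --- pulling out $|P(\alpha)|\le\sum_{Q<p\le N}\log p\ll N$ as a sup-norm factor and bounding what remains by a major-arc approximation for $F$ --- is the natural reading of the lemma, whose right-hand side visibly has the trivial bound for $P$ placed out front. Note that the paper itself offers no proof here: by its stated convention the lemma is quoted from Polyakov \cite{5}, and its phrase ``follows from the relation given in \eqref{14}'' points at the mean-square bound $\int_0^1|P(\alpha)|^2\,d\alpha\ll N\log N$ rather than the pointwise bound you use; either can be made to work, but yours matches the displayed shape of \eqref{21} more directly.

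The genuine gap is in the error term of your approximation $F(\tfrac{a}{q}+z)=\tfrac{V(a,q)}{q}K(z)+O(q+N|z|)$. The argument you sketch --- sort $m$ into residue classes $b\bmod q$, factor out $e(ab^2/q)$, and compare each class sum $\sum_{m\equiv b}e(m^2z)$ with $\tfrac1q K(z)$ by Euler--Maclaurin --- produces an error $O(1+N|z|)$ in \emph{each} of the $q$ classes, hence $O(q(1+N|z|))=O(q+qN|z|)$ in total: the analytic variation error is incurred once per class, not once globally, and taking absolute values class by class forfeits any cancellation among the phases $e(ab^2/q)$. On $|z|\le 1/(q\tau)$ this gives $O(q+N/\tau)$, a factor $q$ larger than the $O(q+N/(q\tau))$ demanded by \eqref{21}; after summing over $a$ and $q$ your majorant exceeds the stated one by roughly $Q/\log Q$. (That weaker bound would still suffice for the application \eqref{31}, but it does not prove the lemma as stated.) To close the gap you need the sharper major-arc estimate coming from Poisson summation/Gauss-sum cancellation, of the shape $F(\tfrac{a}{q}+z)-\tfrac{V(a,q)}{q}K(z)\ll q^{1/2+\varepsilon}(1+N|z|)^{1/2}$ (Vaughan's Theorem 4.1 type); on $|z|\le 1/(q\tau)$ this is $\ll q^{\varepsilon}(N/\tau)^{1/2}$, which is indeed $\ll N/(q\tau)$ for $q\le Q=N^{\epsilon\delta}$ with $\epsilon$ small. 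Either invoke that estimate explicitly or exploit the oscillation of $e(ab^2/q)$ across the classes; the elementary per-class Euler--Maclaurin comparison alone cannot deliver the error term in the form you assert.
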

\begin{lem} \label{lemma5}
Put $q = kr$ where $\gcd(k,r)=1$ \emph{(}because otherwise $R_1^{(3)}(n)$ would be $0$\emph{)}, then for all $N/2 < n \le N$,
\begin{equation} \label{22}
\begin{split}
R_1^{(3)}(n)& = \Bigg( \sum_{r \le Q}\frac{1}{r\varphi(r)}\sum_{{\chi^*}\mod r}\tau(\overline{\chi^{*}})\sum_{a_{1}\le r}{^{*}}V(a_1,r) \Bigg) \\
			& \cdot \Bigg( \chi^{*}(a_{1})e\left(-n\frac{a_1}{r}\right)\int_{-1/r\tau}^{1/r\tau}K(z)W(\chi^{*},z)e(-nz)dz \Bigg) \\
			& \cdot \Bigg( \sum_{\substack{k \le Q/r \\ \gcd(k,r)=1}}\frac{\mu(k)}{k\varphi(k)}\sum_{a \le k}{^{*}}V(a,k)e\left(-n\frac{a}{k}\right)+BN^{1/2-2\delta} \Bigg).
\end{split}
\end{equation}
\end{lem}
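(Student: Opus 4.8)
The plan is to reorganize the double sum over pairs $(q,\chi)$ defining $R_1^{(3)}(n)$ according to the conductor of $\chi$, so that the contribution of each primitive character is peeled off from an arithmetic factor depending only on the cofactor $k=q/r$. Every character $\chi\bmod q$ occurring in $R_1^{(3)}(n)$ (recall $\chi\ne\chi_0$ and $\chi\ne\tilde{\chi}\chi_0$) is induced by a unique primitive character $\chi^{*}\bmod r$ with $r\mid q$; writing $q=kr$ and letting $\chi^{*}$ range over primitive characters $\bmod\, r$ with $r\le Q$ and $k$ range over $k\le Q/r$, I would replace $\sum_{q\le Q}\sum_{\chi}$ by $\sum_{r\le Q}\sum_{\chi^{*}\bmod r}\sum_{k\le Q/r}$. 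Two simplifications then come for free. First, since every prime in the range satisfies $p>Q\ge q$, no such $p$ divides $q$, so $\chi(p)=\chi^{*}(p)$ for every term and hence $W(\chi,z)=W(\chi^{*},z)$ exactly. Second, the standard relation between the Gauss sum of an imprimitive character and that of its primitive inducing character gives $\tau(\overline{\chi})=\mu(k)\,\overline{\chi^{*}}(k)\,\tau(\overline{\chi^{*}})$, a factor that vanishes unless $k$ is squarefree and $\gcd(k,r)=1$. This is exactly what forces the hypothesis $\gcd(k,r)=1$ recorded in the statement.

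Next I would apply the Chinese Remainder Theorem to the remaining $q$-periodic data. Writing a reduced residue $a\bmod q=kr$ as a pair with $a_1\bmod r$ and $a_2\bmod k$ reduced, the quadratic Gauss sum factors multiplicatively, $V(a,q)=V(a_1,r)V(a_2,k)$, the additive character splits as $e(-na/q)=e(-na_1/r)\,e(-na_2/k)$, and $\chi(a)=\chi^{*}(a)=\chi^{*}(a_1)\chi^{*}(k)$. Combining the last factor with the Gauss-sum relation of the previous step, the character values attached to $k$ cancel, $\chi^{*}(k)\,\overline{\chi^{*}}(k)=1$, leaving $\mu(k)$ with no character weight on the $k$-side. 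Using $\varphi(kr)=\varphi(k)\varphi(r)$ and collecting terms, each summand factors into an $r$-part, $\frac{1}{r\varphi(r)}\tau(\overline{\chi^{*}})\sum_{a_1\le r}{^{*}}V(a_1,r)\chi^{*}(a_1)e(-na_1/r)$ together with the inner integral, and a $k$-part, $\frac{\mu(k)}{k\varphi(k)}\sum_{a_2\le k}{^{*}}V(a_2,k)e(-na_2/k)$. This is precisely the shape of the three bracketed factors in \eqref{22}.

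The one genuinely analytic point, and the step I expect to be the main obstacle, is that after this factorization the inner integral still runs over the $k$-dependent interval $[-1/(kr\tau),1/(kr\tau)]$, whereas \eqref{22} requires the $k$-independent interval $[-1/(r\tau),1/(r\tau)]$ so that the integral may be drawn out of the $k$-sum. Replacing one interval by the other costs the tail $\int_{1/(kr\tau)\le|z|\le 1/(r\tau)}K(z)W(\chi^{*},z)e(-nz)\,dz$, and the task is to show that, after multiplication by the $k$-part and summation over $k$, this tail is accounted for by the term $BN^{1/2-2\delta}$ of \eqref{22}. Here the trivial estimate for $W(\chi^{*},z)$ will not do, since it leaves a divergent factor of order $\sum_{k}k^{-1/2}\log k$; one must instead exploit cancellation in the character sum over primes. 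I would combine the decay bound $K(z)\ll N^{-1/2}|z|^{-1}$, valid throughout the tail where $|z|\ge 1/(kr\tau)\gg 1/N$, with a mean-value bound for $W(\chi^{*},z)$ applied on average over the primitive characters $\chi^{*}\bmod r$ via the large sieve; the exclusion of $\chi_0$ and $\tilde{\chi}\chi_0$, together with the zero-free region of Lemma~\ref{lemma2}, supplies the necessary savings. A Cauchy--Schwarz argument over $(r,\chi^{*})$, using $|\tau(\overline{\chi^{*}})|=r^{1/2}$, $|V(a,r)|\ll r^{1/2}$, and the calibrations $Q=N^{\epsilon\delta}$ and $\tau=N^{1-46\delta}$, should then reduce the total error to the claimed $BN^{1/2-2\delta}$, which establishes \eqref{22}.
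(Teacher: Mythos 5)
The paper itself contains no proof of Lemma~\ref{lemma5}: by the convention announced at the top of \S 2, any lemma stated without proof is quoted from Polyakov \cite{5}, so there is no in-paper argument to measure your attempt against. Your outline is the standard (and surely the intended) route: sort the characters $\chi \bmod q$ by their inducing primitive character $\chi^{*} \bmod r$ with $q = kr$, use $W(\chi,z) = W(\chi^{*},z)$ (valid since every prime occurring in $W$ exceeds $Q \ge q$), apply $\tau(\bar{\chi}) = \mu(k)\overline{\chi^{*}}(k)\tau(\overline{\chi^{*}})$ to force $k$ squarefree and coprime to $r$ --- which is exactly where the hypothesis $\gcd(k,r)=1$ in the statement comes from --- and factor the sum over $a \bmod kr$ by the Chinese Remainder Theorem so that the $r$-part and $k$-part of each summand separate as in \eqref{22}. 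Two caveats, neither fatal to the strategy. First, the one genuinely analytic step is the one you flag yourself: the $BN^{1/2-2\delta}$ term, which must absorb the cost of decoupling the integral (over $|z|\le 1/(kr\tau)$ versus $|z|\le 1/(r\tau)$) from the $k$-sum; your proposed Cauchy--Schwarz/large-sieve treatment is plausible and consistent with the calibrations $Q=N^{\epsilon\delta}$, $\tau=N^{1-46\delta}$, but it is only sketched, not carried out. Second, the factorization $V(a,q)=V(a_1,r)V(a_2,k)$ holds only after the residues $a_1,a_2$ are suitably twisted (by $k$, $r$, or their squares, depending on the parametrization $a=a_1k+a_2r$); this washes out on reindexing the complete sums over $a_1$ and $a_2$, but it should be stated. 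Subject to filling in the tail estimate, your skeleton is correct and matches what the cited source does.
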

The last set of two lemmas deals with auxiliary functions that will later be useful in developing an upper bound for $R(n)$.
\begin{lem} \label{lemma6}
Let $\mathscr{P}(n)$ be defined as in \emph{\S 1.2}, then for all $n \le N$, except for $\ll N^{0.7}$ of these integers,
\begin{equation} \label{23}
\Sigma(n,Q) = \mathscr{P}(n)+Bn^{-2\delta_1}.
\end{equation}
\end{lem}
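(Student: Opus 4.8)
The plan is to recognize $\sum(n,Q)$ as the truncation at $q\le Q$ of the complete singular series attached to the representation $n=p+m^2$, to evaluate that series in closed form as an Euler product equal to $\mathscr{P}(n)$, and then to bound the discarded tail on average over $n$. First I would write $\sum(n,Q)=\sum_{q\le Q}A_n(q)$ with $A_n(q):=\frac{\mu(q)}{q\varphi(q)}\sum_{a\le q}^{*}V(a,q)e(-na/q)$. Since $\mu$, $\varphi$, and the quadratic Gauss sum $V(a,q)$ obey (twisted) multiplicativity in $q$, the function $q\mapsto A_n(q)$ is multiplicative; $\mu(q)$ kills all non-squarefree $q$, and $V(a,q)=0$ when $q\equiv 2\pmod 4$, so only odd squarefree $q$ survive and it suffices to compute the local factor at each odd prime.

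Second, for odd squarefree $q$ I would use $V(a,q)=\epsilon_q\left(\frac aq\right)\sqrt q$, where $\epsilon_q\in\{1,i\}$ according to $q\bmod 4$ and $\epsilon_q^2=\left(\frac{-1}{q}\right)$, together with the separable Gauss-sum identity $\sum_{a\le q}^{*}\left(\frac aq\right)e(-na/q)=\left(\frac{-n}{q}\right)\tau_q$, where $\tau_q:=\sum_{a\le q}^{*}\left(\frac aq\right)e(a/q)=\epsilon_q\sqrt q$. This collapses the inner sum to $\sum_{a\le q}^{*}V(a,q)e(-na/q)=q\left(\frac nq\right)$, which is correctly $0$ when $\gcd(n,q)>1$, so that $A_n(q)=\frac{\mu(q)}{\varphi(q)}\left(\frac nq\right)$. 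Multiplicativity then yields $\sum_{q=1}^{\infty}A_n(q)=\prod_{p\ge 3}\left(1+\frac{\mu(p)}{\varphi(p)}\left(\frac np\right)\right)=\prod_{p\ge 3}\left(1-\frac{1}{p-1}\left(\frac np\right)\right)=\mathscr{P}(n)$, exactly the constant appearing in Conjecture~\ref{HL}.

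The error in \eqref{23} is therefore the tail $-\sum_{q>Q}A_n(q)$, and estimating it is the crux. It is not small pointwise, since $|A_n(q)|=|\mu(q)|/\varphi(q)$ and $\sum_{q>Q}1/\varphi(q)$ diverges, so cancellation in the symbol is essential. The key observation is that, for $n$ not a perfect square, quadratic reciprocity turns $q\mapsto\left(\frac nq\right)$ into a non-principal real Dirichlet character $\psi_n$ of modulus $\le 4n\le 4N$. I would then control the tail by its second moment over $n$: expanding $\sum_{n\le N}\left|\sum_{Q<q\le y}\frac{\mu(q)}{\varphi(q)}\left(\frac nq\right)\right|^2$ and separating the diagonal $q_1=q_2$, which contributes $\ll N\sum_{q>Q}\frac{1}{q\varphi(q)}\ll N/Q$ (using $\left(\frac nq\right)^2=1$ exactly when $\gcd(n,q)=1$), from the off-diagonal, where $\left(\frac{\cdot}{q_1q_2}\right)$ is non-principal and the P\'olya--Vinogradov inequality gives $\sum_{n\le N}\left(\frac{n}{q_1q_2}\right)\ll\sqrt{q_1q_2}\,\log(q_1q_2)$. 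Chebyshev's inequality applied to this second moment then isolates the set of $n$ on which the tail exceeds $n^{-2\delta_1}$; balancing the cutoff $y$ and adjoining the far tail $q>y$ — handled for each good $n$ by partial summation against $\sum_{q\le x}\mu(q)\psi_n(q)\ll x\exp(-c\sqrt{\log x})$, i.e. the analytic behaviour of $1/L(s,\psi_n)$ near $\Re s=1$ — keeps the exceptional count within $\ll N^{0.7}$.

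The main obstacle is precisely this tail estimate, and the delicate point is that the cancellation in $\sum_{q>Q}\frac{\mu(q)}{\varphi(q)}\psi_n(q)$ is governed by the distance of the zeros of $L(s,\psi_n)$ from the line $\Re s=1$. For the few characters $\psi_n$ admitting a Siegel zero this cancellation fails and the tail can be large, so those $n$ — whose moduli are bounded in number by the Landau--Page theorem, together with the perfect squares and the $n$ divisible by a large square — must be collected into the exceptional set. Verifying that their total count stays $\ll N^{0.7}$ while the tail is $O(n^{-2\delta_1})$ for every remaining $n$ is where the real work lies.
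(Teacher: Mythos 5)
The paper offers no proof of this lemma: it is one of the statements explicitly deferred to Polyakov [5] (see the remark at the start of \S 2), so there is no in-paper argument to compare against. Judged on its own merits, your main-term computation is correct and is the standard one: $A_n(q)$ is multiplicative, only odd squarefree $q$ survive, the local factor at an odd prime is $\frac{\mu(p)}{p\varphi(p)}\cdot p\left(\frac{n}{p}\right)=-\frac{1}{p-1}\left(\frac{n}{p}\right)$ (correctly vanishing when $p\mid n$), and the completed series is $\mathscr{P}(n)$. The difficulty is entirely in the tail $\sum_{q>Q}A_n(q)$, and there your quantitative plan does not close.

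Concretely: (a) the second-moment route cannot reach an exceptional set of size $N^{0.7}$ against a threshold as small as $n^{-2\delta_1}=n^{-0.00005}$. The diagonal alone contributes $\gg N/Q$ to the mean square, and Chebyshev then gives $\ll N^{1+4\delta_1}/Q$ exceptions, which with the paper's $Q=N^{\epsilon\delta}$, $\delta\le 0.0025$ (let alone $Q=\exp\{\log^{1/2}N\}$) is close to $N$, not $N^{0.7}$; the off-diagonal treated by P\'olya--Vinogradov forces the intermediate cutoff $y$ below $N$ and still leaves roughly $N^{0.9}$ exceptions. (b) The pointwise input $\sum_{q\le x}\mu(q)\psi_n(q)\ll x\exp\{-c\sqrt{\log x}\}$ is not uniform in the conductor of $\psi_n$, which is of size $\asymp n$ and hence enormous compared with $x\asymp Q$; the zero-free region for $L(s,\psi_n)$ has width $c/\log n$, so no such saving is available at $x=Q$. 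Even where such a bound holds it yields only $\exp\{-c\sqrt{\log}\}$ savings, which is \emph{larger} than every power $n^{-\eta}$ and so cannot produce the error term $Bn^{-2\delta_1}$. (c) Landau--Page controls only zeros within $c/\log$ of $s=1$; a power-saving tail requires $L(s,\psi_n)\ne 0$ in a fixed-width strip $\sigma>1-\delta_0$. The missing ingredient is a zero-density estimate near $\sigma=1$ for the family of real primitive characters of conductor up to $N$: one shows the number of squarefree kernels $d$ whose character has a zero in such a strip is $\ll N^{c\delta_0}$, and since each bad $d$ accounts for $\ll\sqrt{N/d}$ integers $n=dm^2\le N$, the exceptional set is $\ll N^{0.7}$ while every remaining $n$ enjoys the power saving. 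This density step, not the second moment, is the engine of the lemma, and it is absent from your outline.
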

\begin{lem} \label{lemma7}
If $r \le Q$, then for all $n \le N$, except for $\ll N^{1-\delta_1}$ of these integers, $\gcd(t,n) \le N^{1-\delta_1}$.
\end{lem}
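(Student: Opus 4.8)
The plan is to prove the lemma by a direct counting argument: I would bound the number of $n \le N$ that violate the conclusion---those with $\gcd(t,n) > N^{1-\delta_1}$---and show this number is $\ll N^{1-\delta_1}$. Since $r \le Q = N^{o(1)}$, the integer $t$ occurring here is bounded by a fixed power of $N$, and I would make its dependence on $r$ explicit so as to guarantee $t \le N^{O(1)}$ uniformly in $r$. The divisor bound $D(t) \ll_\epsilon t^\epsilon$ (where $D(t)$ denotes the number of divisors of $t$) then gives $D(t) \ll_\epsilon N^\epsilon$ with a constant independent of $r$ and $n$.

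For an exceptional $n$, set $g = \gcd(t,n)$, so that $g \mid t$, $g \mid n$, and $g > N^{1-\delta_1}$. Grouping the exceptional $n$ according to the value of $g$, each admissible $g$ is a divisor of $t$ exceeding $N^{1-\delta_1}$, and the number of $n \le N$ divisible by a fixed such $g$ is at most $\left\lfloor N/g \right\rfloor < N/N^{1-\delta_1} = N^{\delta_1}$. Summing over the divisors $g \mid t$ with $g > N^{1-\delta_1}$ yields
\begin{equation*}
\#\{\, n \le N : \gcd(t,n) > N^{1-\delta_1} \,\} \;\le\; \sum_{\substack{g \mid t \\ g > N^{1-\delta_1}}} \left\lfloor \frac{N}{g} \right\rfloor \;<\; D(t)\, N^{\delta_1}.
\end{equation*}

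To conclude, I would choose $\epsilon$ small enough that $2\delta_1 + \epsilon < 1$---harmless since $\delta_1 \le 0.000025$---so that $D(t)\,N^{\delta_1} \ll N^{\delta_1 + \epsilon} \ll N^{1-\delta_1}$, which is the assertion. The main obstacle, as I see it, is not the counting but the bookkeeping that keeps $t$ polynomially bounded in $N$ throughout the range $r \le Q$; this is exactly where the hypothesis $r \le Q$ is needed. Indeed, if $t$ were already of size $N^{o(1)}$ the lemma would reduce to the triviality $\gcd(t,n) \le t \ll N^{1-\delta_1}$ with no exceptions at all, so the genuine content lies in controlling the divisors of $t$ when $t$ is comparable to a true power of $N$.
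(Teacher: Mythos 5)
The paper does not actually prove this lemma: it is one of the statements imported verbatim from Polyakov (see the remark at the start of \S 2 that unproved lemmas are taken from \cite{5}), so there is no in-paper argument to measure you against. Your divisor-counting argument is correct and self-contained: for a fixed $t$, every exceptional $n$ is divisible by some divisor $g\mid t$ with $g>N^{1-\delta_1}$, each such $g$ accounts for at most $N/g<N^{\delta_1}$ values of $n\le N$, and the divisor bound $D(t)\ll_\epsilon t^\epsilon$ closes the estimate; even if $t$ is then allowed to range over all moduli $r\le Q$, summing the bound over $r$ costs only another factor $Q=N^{o(1)}$ and the conclusion survives. In fact your argument proves much more than is claimed, since it bounds the exceptional set by $N^{\delta_1+\epsilon}$ rather than $N^{1-\delta_1}$. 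You are also right to flag the more serious issue, which lies in the statement rather than in any proof of it: the quantity $t$ is never defined in the lemma, and in both applications the paper makes ($t=\tilde r$ and $t=r$ with $r\le Q$) one has $\gcd(t,n)\le t\le Q\ll N^{1-\delta_1}$ outright, so the lemma as transcribed is vacuous. The version that would actually support the deductions \eqref{26} and \eqref{35} must assert something stronger, e.g.\ $\gcd(t,n)\le N^{c\delta_1}$ for a small constant $c$, and it is precisely your counting argument (with the threshold $N^{1-\delta_1}$ replaced by $N^{c\delta_1}$, still giving $\ll D(t)N^{1-c\delta_1}$ exceptions) that establishes such a statement. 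So your approach is not merely an alternative to the paper's missing proof; it is the natural repair of the lemma itself.
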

Now we are ready to prove our main theorem.

\section{Proof of Theorem~\ref{result1}}
Since $R(n) = R_1(n)+R_2(n)$ and in Lemma~\ref{lemma1} we prove an upper bound for $R_2(n)$, all we have to do is to prove an upper bound for $R_1(n)$. In order to do so, we examine $R_1^{(1)}(n)$, $R_1^{(2)}(n)$, and $R_1^{(3)}(n)$, individually. \newline
\indent In \eqref{18} of Lemma~\ref{lemma3}, a simplified expression is given for $R_1^{(1)}(n)$. $\Sigma(n,Q)$ is evaluated in Lemma~\ref{lemma6}.
Similarly, $\epsilon(\beta)$ is defined to be either $1$ or $0$ depending on the existence of $\beta$. Hence, in order to prove an upper bound for $R_1^{(1)}(n)$, we must prove upper bounds for $\sum_{n=u+m}\frac{1}{2\sqrt{m}}$, $\Sigma(n,Q,\beta)$, and $u^{\beta-1}$. \newline
\indent It follows from Lemma~\ref{lemma7} that for all $N/2 < n \le N$, with $t = \tilde{r}$, except for $\ll N^{1-\delta_1}$ of these integers, when $\tilde{r} > N^{3.6\delta_1}$ \cite{5},
\begin{equation} \label{26}
\Sigma(n,Q,\beta) \ll n^{-2\delta_1}.
\end{equation}
From Lemma~\ref{lemma2}, it follows that for $Q < u \le N$,
\begin{equation} \label{27}
1-u^{\beta-1} = \int_{\beta}^{1}u^{s-1}\log u ds \ge c_3(1-\beta)\log n,
\end{equation}
which implies
\begin{equation} \label{28}
u^{\beta-1} \le 1-c_3(1-\beta)\log n.
\end{equation}
It is easy to see that
\begin{equation}
\sum_{n=u+m}\frac{1}{2\sqrt{m}} \le \frac{\zeta(2)}{2}.
\end{equation}
As a result, if the Siegel zero $\beta$ exists, then
\begin{equation} \label{29}
\begin{split}
R_1^{(1)}(n) &\ll \left(\sum_{n=u+m}\frac{1}{2\sqrt{m}}\right)\\
&\times \left(\mathscr{P}(n)+Bn^{-2\delta_1}-{n^{-2\delta_1}}(1-c_3(1-\beta)\log n)\right)+BN^{1/2-34\delta}.
\end{split}
\end{equation}
If the Siegel zero $\beta$ does not exist, then
\begin{equation} \label{30}
R_1^{(1)}(n) \ll \left(\sum_{n=u+m}\frac{1}{2\sqrt{m}}\right)\left(\mathscr{P}(n)+Bn^{-2\delta_1}\right)+ BN^{1/2-34\delta}.
\end{equation}
\indent We now move on to formulate an upper bound for $R_1^{(2)}(n)$. This is a rather easy task to complete since Polyakov \cite{5} already proves it. Using \eqref{14}, it follows from Lemma~\ref{lemma3},
\begin{equation} \label{31}
R_1^{(2)}(n) \ll N^{100\delta}.
\end{equation}
\indent Lastly, we prove an upper bound for $R_1^{(3)}(n)$. If we consider the sum in \eqref{22} of Lemma~\ref{lemma5} for $r \le N^{5\delta}$ then we can denote this first partial sum as $R_1^{(3.1)}(n)$. If we consider the same sum for $N^{5\delta} < r \le Q$, then we can denote this second partial sum as $R_1^{(3.2)}(n)$. Hence, $R_1^{(3)}(n) = R_1^{(3.1)}(n) + R_1^{(3.2)}(n)$. Polyakov \cite{5} proves that for all $n \le N$ except for $\ll N^{0.7}$ of these integers,
\begin{equation} \label{32}
R_1^{(3.1)}(n) \ll (\mathscr{P}(n)+Bn^{-2\delta_1})\sum_{r \le N^{5\delta}}\sum_{\chi^{*}\mod r}\left(\int_{-1/(r\tau)}^{1/(r\tau)}|W(\chi^{*},z)|^{2}dz\right)^{1/2}.
\end{equation}
The double sum in \eqref{32} was considered by Montgomery and Vaughan \cite{8} who showed that if the Siegel zero $\beta$ exists for $n \le N$,
\begin{eqnarray} \label{33}
\sum_{r \le N^{5\delta}}\sum_{\chi^{*}\mod r}\left(\int_{-1/(r\tau)}^{1/(r\tau)}|W(\chi^{*},z)|^{2}dz\right)^{1/2} \nonumber\\
\ll N^{1/2}\exp\Big\{\frac{-c}{\delta}\Bigr\}(1-\beta)\log N,
\end{eqnarray}
and in the absence of an exceptional zero $\beta$,
\begin{equation} \label{34}
\sum_{r \le N^{5\delta}}\sum_{\chi^{*}\mod r}\left(\int_{-1/(r\tau)}^{1/(r\tau)}|W(\chi^{*},z)|^{2}dz\right)^{1/2} \ll N^{1/2}\exp\Big\{\frac{-c}{\delta}\Bigr\}.
\end{equation}
With the assistance of Lemma~\ref{lemma7}, with $t=r$ and $\delta_1 = \delta$, Polyakov \cite{5} proves that independent of the existence of the Siegel zero $\beta$, for all $N/2 < n \le N$, except for $\ll N^{1-\delta}$ of these integers,
\begin{equation} \label{35}
R_1^{(3.2)}(n) \ll N^{1/2-2\delta}.
\end{equation}
Combining our results yields two upper bounds for $R(n)$ for $N/2 < n \le N$. The first bound assumes the existence of $\beta$ and holds for all but at most $\ll N^{1-\delta_1}$ of these integers,
\begin{equation} \label{36}
R(n) \ll_{N} \mathscr{P}(n)N^{1/2}\exp\Big\{\frac{-c}{\delta}\Bigr\}(1-\beta)\log N.
\end{equation}
Note that we only considered the case of $\tilde{r} > N^{3.6\delta_1}$ when obtaining the estimate in \eqref{36}. For $\tilde{r} \le N^{3.6\delta_1}$, we have then that for all $n \le N$ except for $\ll N^{0.7}$ of these integers, $|\Sigma(n,Q,\beta)| \le \mathscr{P}(n) + Bn^{-2\delta_1}$, from which one derives that the bound in \eqref{36} still holds. \newline
\indent The second bound assumes that $\beta$ does not exist, and holds for all $N/2 < n \le N$ except for at most $\ll N^{1-\delta}$ of these integers,
\begin{equation} \label{37}
R(n) \ll_{N} \mathscr{P}(n)N^{1/2}\exp\Big\{-\frac{c}{\delta}\Bigr\}.
\end{equation}
We can also obtain an unconditional upper bound on $\mathcal{R}(n)$ of the correct order of magnitude with no exceptions by way of sieve methods. We will derive an upper bound on $\mathcal{R}_k(n)$, the number of representations for the equation $n = p + m^k$ where $k \ge 2$, and then take the case for $k = 2$.\newline
\indent Let $\mathscr{A}$ stand for a general integer sequence to be ``sifted'' and let $\mathfrak{P}$ stand for a ``sifting'' set of primes. Moreover, $S(\mathscr{A}; \mathfrak{P}, z)$ is a sifting function where $z \ge 2$ is a real number. In the case of the present problem, we are sifting the set of numbers $n-m^2$ in order to estimate how often it is prime. The appropriate method we will utilize is to obtain a Selberg upper bound for $S(\mathscr{A}; \mathfrak{P}, z)$. Typically, an upper bound produced by Selberg's method is of $\ll$-type; however, by incorporating several important theorems of Halberstam and Richert \cite{12}, more explicit estimates can be yielded. We should note that neither the problem of the sum of a prime and a square nor the problem of the sum of a prime and a $k$-th power is dealt with in \cite{12}. \newline
\indent The sequence that is to be sifted for $1 < Y \le N$ is
\begin{equation*}
\mathscr{A} = \{n-m^k: N-Y<n \le N\}
\end{equation*}
From Theorem 5.3 of Halberstam and Richert \cite{12}, we let $F(n)$ to be a distinct irreducible polynomial with integral and positive leading coefficients, and let $\rho_k(p,n)$ denote the number of solutions to the congruence \newline
\begin{equation*}
m^k - n \equiv 0 \pmod p,
\end{equation*}
with $n$ constant for the purposes of the congruence.
Also, $N$ and $Y$ are real numbers satisfying \newline
\begin{equation*}
1 < Y \le N.
\end{equation*}
Hence from \cite{12} and taking $g = 1$,
\begin{equation*}
\begin{split}
|\{n: N-Y < n \le N,n-m^k = p\}|
&\le 2\prod_{p}\left(1-\frac{\rho_{k}(p,n)-1}{p-1}\right) \\
&\cdot \frac{Y}{\log Y}\left(1+O\left(\frac{\log\log 3Y}{\log Y}\right)\right).
\end{split}
\end{equation*}
\indent Now we take the case for $k=2$.
Thus for $Y = N$,
\begin{equation} \label{5}
\begin{split}
|\{n: 0 < n \le N,n-m^2 = p\}| &\le 2\prod_{p}\left(1-\frac{\left(\frac{n}{p}\right)}{p-1}\right)\\
&\cdot\frac{N}{\log N}\left(1+O\left(\frac{\log\log 3N}{\log N}\right)\right).
\end{split}
\end{equation}
\indent Theorem~\ref{result1} follows. Note that the $O$ constants may be subject to the dependency mentioned in Remark 1 of Halberstam and Richert \cite[Ch. 5, Sec. 6]{12}.

\section{A note on the combinatorial sieve}
\indent Of course, Selberg's method is not the only sieve method that can be used to obtain an upper bound on $\mathcal{R}(n)$ for $n \le N$ with no exceptions. For instance, we can obtain an upper bound for $S(\mathscr{A}; \mathfrak{P}, z)$ via a combinatorial sieve, as Zaccagnini \cite{15} mentions. Hence, as in the derivation of \eqref{5}, our upper bound will be applicable to $\mathcal{R}_k(n)$ and then we will take the case for $k = 2$. We proceed as follows. \newline
\indent The sequence that is to be sifted for $u \ge 1$ and $N^{1/u} \ge 2$ is
\begin{equation*}
\mathscr{A} = \{n-m^k: 0 \le n \le N\}
\end{equation*}
\begin{lem} \label{HR1}
Let $F_1(n), \cdots, F_g(n)$ be distinct irreducible polynomials with integral and positive leading coefficients where $F(n) = F_1(n) \cdots F_g(n)$. Let $\rho(p,n)$ denote the number of solutions to the congruence \newline
\begin{equation*}
F(n) \equiv 0 \pmod p.
\end{equation*}
Assume that for all primes $p$
\begin{equation*}
\rho(p,n) < p.
\end{equation*}
Let $q = q(N,u)$ denote a number having no prime divisors less than $N^{1/u}$ and satisfying $\frac{\log q}{\log N} \ll 1$. \newline
Then
\begin{equation} \label{4}
\begin{split}
|\{n: 0 \le n &\le N,\text{ $F_i(n) = q_i$ for $i = 1,\cdots,g$}\}| \\
&= N \prod_{p < N^{1/u}}\left(1-\frac{\rho(p,n)}{p}\right)\Big\{1+O_F\left(\exp\{-u(\log u \right. \\
&\quad \left. - \log\log 3u - \log g - 2)\}\right)+O_F\left(\exp\{-\sqrt{\log N}\}\right)\Bigr\}.
\end{split}
\end{equation}
\end{lem}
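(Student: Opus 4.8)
The plan is to recognize \eqref{4} as the fundamental lemma of the combinatorial (Brun) sieve applied to the sequence $\mathscr{A}=\{F(n):0\le n\le N\}$ with sifting range $z=N^{1/u}$. Writing $P(z)=\prod_{p<z}p$ and $\mathscr{A}_d=\{n:0\le n\le N,\ F(n)\equiv 0\pmod d\}$, the quantity on the left of \eqref{4} is precisely the sifting function
\[
S(\mathscr{A};\mathfrak{P},z)=|\{n\le N:\gcd(F(n),P(z))=1\}|=\sum_{d\mid P(z)}\mu(d)\,|\mathscr{A}_d|,
\]
the last equality being Legendre's inclusion--exclusion identity. The difficulty, as always with Legendre's formula, is that the full expansion over $d\mid P(z)$ carries an uncontrollably large error, so the real content is to truncate it.

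First I would compute the local densities. By the Chinese Remainder Theorem $\rho(\cdot,n)$ is multiplicative on squarefree arguments, so counting residue classes modulo $d$ in the interval $[0,N]$ gives
\[
|\mathscr{A}_d|=\frac{\rho(d,n)}{d}\,N+R_d,\qquad \rho(d,n)=\prod_{p\mid d}\rho(p,n),\qquad |R_d|\le\rho(d,n),
\]
the error $R_d$ coming from the incomplete final block. Summing the main term against $\mu(d)$ over all $d\mid P(z)$ produces the product $N\prod_{p<z}(1-\rho(p,n)/p)$ of \eqref{4}. The sieve has dimension $g$: since each $F_i$ is irreducible, Landau's prime ideal theorem yields $\sum_{p\le x}\rho_i(p,n)\frac{\log p}{p}\sim\log x$, so $\rho(p,n)=\sum_i\rho_i(p,n)$ has mean value $g$, while the hypothesis $\rho(p,n)<p$ keeps the product nondegenerate.

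Next I would replace the exact identity by Brun's upper and lower bound sieves, or equivalently invoke the fundamental lemma of \cite{12} for a sieve of dimension $g$ with parameter $s=\log N/\log z=u$. Truncating the inclusion--exclusion to squarefree $d$ with at most $2bg$ prime factors, for a Brun parameter $b$ still to be chosen, gives
\[
S(\mathscr{A};\mathfrak{P},z)=N\prod_{p<z}\Bigl(1-\frac{\rho(p,n)}{p}\Bigr)\bigl\{1+O_F(e^{-u(\log u-\log\log 3u-\log g-2)})\bigr\}+\sum_{\substack{d\mid P(z)\\ \omega(d)\le 2bg}}B\,|R_d|,
\]
where the combinatorial factor is the standard fundamental--lemma estimate for dimension $g$; the $\log g$ enters through the binomial coefficients bounding the truncated Brun sums.

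Finally I would dispose of the remainder. Since $|R_d|\le\rho(d,n)$ and every retained $d$ obeys $d<z^{2bg}=N^{2bg/u}$, the dimension bound on $\rho$ gives $\sum_{\omega(d)\le 2bg}|R_d|\ll N^{2bg/u}(\log N)^{O(1)}$; dividing by the main term $N\prod_{p<z}(1-\rho(p,n)/p)\asymp N(u/\log N)^{g}$ expresses this contribution in the multiplicative form of \eqref{4}. \textbf{The main obstacle is the balancing act between the two error sources}: $b$ must be taken large enough that the Brun truncation error $e^{-u(\log u-\cdots)}$ is genuinely negligible, yet small enough that $2bg/u$ stays bounded away from $1$ so the remainder does not swamp the main term. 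Arranging $1-2bg/u\ge (\log N)^{-1/2}$ while $u$ is allowed to grow up to order $\sqrt{\log N}$ forces the remainder to contribute exactly the second error term $O_F(e^{-\sqrt{\log N}})$, and carrying the dependence on $g$ through the combinatorial coefficients is the delicate bookkeeping. Collecting the two error terms and specializing $F=F_1\cdots F_g$ then yields \eqref{4}.
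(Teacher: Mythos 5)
The paper proves this lemma by a one-line citation: it \emph{is} Theorem 2.6 of Halberstam and Richert \cite{12}, and no independent argument is offered. Your sketch reconstructs the standard proof of that theorem --- local densities $|\mathscr{A}_d|=\rho(d,n)N/d+O(\rho(d,n))$ via the Chinese Remainder Theorem, sieve dimension $g$ from Landau's prime ideal theorem, the fundamental lemma of the combinatorial sieve for the truncated inclusion--exclusion, and the balancing of the truncation error against the remainder --- which is precisely the route Halberstam and Richert themselves take, and at the decisive quantitative step you invoke their fundamental lemma anyway, so this is essentially the same approach and is correct as an outline.
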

\begin{proof}
Lemma ~\ref{HR1} is Theorem 2.6 of Halberstam and Richert \cite{12}.
\end{proof}
\begin{rem} \label{r1}
The $O$ constant in \eqref{4} at most depends upon the degrees and coefficients of $F$.
\end{rem}
Note that as stated by Halberstam and Richert \cite{12}, the expression to the right side of the equality in \eqref{4} is equivalent to
\begin{equation} \label{rightside}
\begin{split}
& \left(u\cdot\exp\{-\gamma\}\right)^{g} \prod_{p}\left(1-\frac{\rho_{k}(p,n)-1}{p-1}\right)\left(1-\frac{1}{p}\right)^{-g+1}\frac{N}{\log N}\\
&\cdot \left(1+O\left(\exp\{-u(\log u - \log\log 3u - \log g - 2)\}\right)+O\left(\frac{u}{\log N}\right)\right).
\end{split}
\end{equation}
We take $g = 1$. Hence, we obtain from Lemma ~\ref{HR1} and \eqref{rightside} an upper bound on $\mathcal{R}_{k}(n)$ of the correct order of magnitude
\begin{equation*}
\begin{split}
|\{n: 0 < n &\le N,n-m^k = p\}|\\
&\le \left(u\cdot\exp\{-\gamma\}\right) \prod_{p}\left(1-\frac{\rho_{k}(p,n)-1}{p-1}\right)\frac{N}{\log N} \\
&\quad \times \left(1+O\left(\exp\{-u(\log u - \log\log 3u - 2)\}\right) \right. \left. +O\left(\frac{u}{\log N}\right)\right).
\end{split}
\end{equation*}
For $k = 2$ and $n \le N$ sufficiently large ($N^{1/u} \ge 2$ and $u \ge 1$), we have
\begin{equation}\label{combinatorial}
\begin{split}
\mathcal{R}(n) &\le \left(u\cdot\exp\{-\gamma\}\right) \mathscr{P}(n)\frac{N}{\log N}\\
&\cdot \left(1+O\left(\exp\{-u(\log u - \log\log 3u - 2)\}\right)+O\left(\frac{u}{\log N}\right)\right).
\end{split}
\end{equation}
As in \eqref{5}, the upper bound achieved in \eqref{combinatorial} has no exceptions for $n \le N$. Note that the $O$ constants may be subject to the dependency mentioned in Remark ~\ref{r1}.

\section*{Acknowledgements}
The author wishes to thank Alberto Perelli, Alessandro Languasco, Charles R. Greathouse IV, and the anonymous referee(s) for their assistance and helpful comments. The author is also grateful to David H. Low and Mohammad S. Moslehian for assisting with any typesetting issues.

\newpage

\end{document}